\newcommand{\IR}{{\mathbb R}}
\newcommand{\IS}{{\mathbb S}}
\newcommand{\inner}[2]{ \ensuremath { \langle {#1},{#2} \rangle } }
\newcommand{\vol}{ \ensuremath { {\rm Vol} } }
\newcommand{\os}{ \ensuremath { +_{\!\varphi} } }
\newcommand{\sub}[1]{ \ensuremath { _{_{\! #1 }}\! } }
\numberwithin{equation}{section}
\newtheoremstyle{Nikos1theoremstyle} 
    {\topsep}    
    {}      
    {}           
    {+2pt}       
    {\scshape}   
    {.}        
    {.5em}       
    {}           
\newtheorem{theorem}{Theorem}[section]
\newtheorem{lemma}[theorem]{Lemma}
\newtheorem{proposition}[theorem]{Proposition}
\newtheorem{definition}[theorem]{Definition}
\begin{document}

\title{Orlicz Mixed Affine Quermassintegrals}

\author{Nikos Dafnis
}

\date{}

\maketitle

\begin{abstract}
  Lutwak's notion of affine quermassintegrals of a convex body quickly
 became of great importance in convex and affine geometry and more 
 recently, also in asymptotic geometric analysis. In this note, 
 following the ideas in \cite{Zhao}, we introduce the notion of the 
 Orlicz mixed affine quermassintegrals of a convex body in $\IR^n$, 
 as a generalization of the affine quermassintegrals in the framework 
 of the Orlicz-Brunn-Minkowski theory. We prove a Minkowski inequality 
 for the Orlicz mixed and affine quermassintegrals, and an 
 Orlicz-Brunn-Minkowski inequality, which provides of a direct 
 generalization of Lutwak's Brunn-Minkowski inequality for affine 
 quermassintegrals, in the Orlicz space.
\end{abstract}


\section{Introduction.}\label{sec.intro}

 Let ${\cal K}^n$ be the class of all non-empty compact convex subsets of 
$\IR^n$.  The support function of a $K\in{\cal K}^n$ is defined by 
\begin{align*}
 h_K(x)=\sup_{y\in K} \inner{x}{y}, \quad x\in\IR^n.
\end{align*}
Support functions are sublinear i.e., subadditive and homogeneous of degree 
$1$, and therefor often regarded as functions on $\IS^{n-1}$. Conversely, 
any sublinear real-valued function on $\IR^n$, is the support function of 
a unique compact convex set. Consequently, any $K\in{\cal K}^n$ is uniquely 
determined by its support function.

The classical Brunn-Minkowski theory combine two basic concepts in 
geometry, volume and Minkowski (vector) addition. More 
precisely, if $K,L\in{\cal K}^n$, $a,b>0$, their Minkowski 
linear combination $aK+bL\in{\cal K}^n$ can be defined by
\begin{align*}
 h_{aK+bL}(u)=ah_K(u)+bh_L(u), \quad u\in\IS^{n-1}.
\end{align*}
The Brunn-Minkowski theory played a crucial role in the development 
of convex geometry and asymptotic geometric analysis, among other 
areas of mathematics. Fundamental within the theory, is the Brunn-Minkowski 
inequality:
\begin{align*}
 V(K+L)^\frac1n\geq V(K)^\frac1n+V(L)^\frac1n,
\end{align*}
for any $K,L\in{\cal K}^n$, and if $K$ and $L$ are non-trivial, then
equality holds if and only if $K$ and $L$ are homothetic, or they lie in 
parallel hyperplanes. 




Minkowski's fundamental notion of mixed volume $V_1(K,L)$, 
$K,L\in{\cal K}^n$, is defined to be proportional to the first variation 
of the volume with respect to Minkowski linear combination, by the formula
\begin{align*}
 V_1(K,L)=\frac1n\lim_{\varepsilon\to0^+}
 \frac{V(K+\varepsilon L)-V(K)}{\varepsilon}.
\end{align*}

 Aleksandrov \cite{Aleks} and Fenchel and Jensen \cite{FJ} proved
that or any $K\in{\cal K}^n$ there exists a unique Borel measure 
$S(K,\cdot)$  on the unit sphere $\IS^{n-1}$, called the surface area 
measure of $K$, such that for any $L\in{\cal K}^n$ their mixed volume 
has the following integral representation:
\begin{align*}
 V_1(K,L)=\frac1n\int_{\IS^{n-1}} h(L,u)\,dS(K,u).
\end{align*}

By its definition the mixed volume satisfy that  $V_1(K,K)=V(K)$ for any 
$K\in{\cal K}^n$. The Minkowski inequality settle in general the relation 
between the volume and the mixed volume: 
\begin{align*}
 V_1(K,L)^n\geq V(K)^{n-1}V(L),
\end{align*}
$K,L\in{\cal K}^n$. If moreover $K$ and $L$ are non-trivial, then 
equality holds if and only if $K$ and $L$ are homothetic, or they 
lie in parallel hyperplanes. 

\medskip

\noindent {\sc $L_p$ Mixed Volume}.

\smallskip

Let ${\cal K}_o^n$ be the class of all nonempty compact convex subsets of 
$\IR^n$, that contain the origin in their interior.  
Firey \cite{Firey1}, \cite{Firey2} introduced a new notion of linear 
combination of convex bodies in ${\cal K}_o^n$: For $p\geq 1$, 
$K,L\in{\cal K}_o^n$ and $a,b>0$, their $L_p$ linear combination is the set 
$a\cdot_{_{^p}}\! K+_pb\cdot_{_{^p}}\!L\in{\cal K}_o^n$, with support function
\begin{align*}
 h_{a\cdot_{_{^p}} K+_p\,b\,\cdot_{_{^p}} L}(u)^p=ah_K(u)^p+bh_L(u)^p,
 \quad u\in\IS^{n-1}.
\end{align*}
See also \cite{LYZ1} for an extension on non-convex sets. 
Using Firey's linear combination, Lutwak \cite{Lutwak7}, \cite{Lutwak5} 
gave rise to the $L_p$-Brunn-Minkowski theory as an extension of the 
Brunn-Minkowski theory, which strengthened many of the classical results. 
In his setting, the $L_p$ mixed volume of $K,L\in{\cal K}_o^n$ can be 
defined by
\begin{align*}
 V_p(K,L)=\frac{p}{n}
 \lim_{\varepsilon\to0^+}\frac{V(K+_p\varepsilon\cdot_{_{^p}} L)-V(K)}
 {\varepsilon},
\end{align*}
 and has the following integral representation
\begin{align*}
 V_p(K,L)=\frac1n\int_{\IS^{n-1}} h_L(u)^p\,h_K(u)^{1-p}\,dS(K,u).
\end{align*}
Note that $V_p(K,K)=V(K)$, for any $K\in{\cal K}_o^n$, and in correspondence 
to the classical theory, the $L_p$-Minkowski inequality asserts that
\begin{align*}
 V_p(K,L)^n\geq V(K)^{n-p}\,V(L)^p,
\end{align*}
for any $K,L\in{\cal K}_{o}^n$, with equality if and only if $K$ and $L$ 
are dilates of each other, 
and as a consequence we get the following $L_p$-Brunn-Minkowski inequality:
\begin{align*}
 V(K+_pL)^\frac{p}n\geq V(K)^\frac{p}n+V(L)^\frac{p}n,
\end{align*}
for any $K,L\in{\cal K}_{o}^n$, with equality if and only if $K$ and $L$ 
are homothetic. 

For further details on Brunn-Minkowski and $L_p$-Brunn-Minkowski 
theories, we refer to the book of Schneider \cite{Schneider}.

\medskip

\noindent  {\sc Orlicz Mixed Volume}.

\smallskip

Lutwak, Yang and Zhang \cite{LYZ2}, \cite{LYZ6} initiated a further extension 
of Brunn-Minkowski theory to an Orlicz setting. This involves the replacement 
of the function $t^p$, by an increasing convex function in 
$\varphi:[0,\infty)\to[0,\infty)$. The new Orlicz-Brunn-Minkowski theory 
studied systematically by Gardner, Hug \& Weil in \cite{GHW1}, where the 
authors constructed a solid framework and indicate its relation to the Orlicz 
spaces. Following their point of view, we denote by $\mathscr{C}$, the class 
of all increasing convex functions $\varphi:[0,\infty)\to[0,\infty)$ with 
$\varphi(0)=0$ and $\varphi(1)=1$. 

For every $K,L\in{\cal K}_o^n$, 
$a,b>0$ and $\varphi\in\mathscr{C}$, the \textit{Orlicz linear combination} 
$a\cdot\sub{^\varphi} K +_\varphi b\cdot\sub{^\varphi} L\in{\cal K}_o^n$ or 
simply $a\cdot K +_\varphi b\cdot L$ is defined by
\begin{align}\label{eq.Orlicz_sum}
 h_{a\cdot K +_\varphi b\cdot L}(x)
 =\inf\left\{\lambda>0:\,a\,\varphi\left(\frac{h_K(x)}{\lambda}\right) 
  + b\,\varphi\left(\frac{h_L(x)}{\lambda}\right) \leq 1\right\},
\end{align}
or equivalently by the implicit equation
\begin{align}\label{eq.Orlicz_sum_*}
 a\,\varphi\left(\frac{h_K(x)}{h_{a\cdot K +_\varphi b\cdot L}(x)}\right) + 
 b\,\varphi\left(\frac{h_L(x)}{h_{a\cdot K +_\varphi b\cdot L}(x)}\right)=1.
\end{align}
The Orlicz linear combination is continuous with respect to the Hausdorff
metric. 
In particular, for any $K,L\in{\cal K}_o^n$, we have that 
\begin{align}\label{eq.O_lim}
 K+_\varphi \varepsilon\cdot L \to K,
\end{align}
as $\varepsilon\to0^+$, in the Hausdorff metric 
\begin{align*}
 \delta(K,L)=\sup_{u\in\IS^{n-1}}\big|h_K(u)-h_L(u)\big|.
\end{align*}

The \textit{Orlicz mixed volume} of $K,L\in{\cal K}_{o}^n$, 
$\varphi\in\mathscr{C}$, can be defined by 
\begin{align}\label{eq.Orlicz-mv_1}
 V_{\varphi}(K,L)=\frac1n\int_{\IS^{n-1}}
 \varphi\left(\frac{h_L(u)}{h_K(u)}\right)h_K(u)\,dS(K,u),
\end{align}
 As in the $L_p$ case, the Orlicz mixed volume of $K,L\in{\cal K}_{o}^n$, 
is proportional to the first variation of volume with respect to their Orlicz 
linear combination:
\begin{align}\label{eq.Orlicz-mv_1_*}
 V_\varphi(K,L)=\frac{\varphi'(1^-)}{n}
 \lim_{\varepsilon\to0^+}\frac{V(K+_\varphi\varepsilon\cdot L)
 -V(K)}{\varepsilon},
\end{align}
where $\varphi'(1^-)$ is the left derivative of $\varphi$ at $1$.
Similarly to the $L_p$ case, we have that $V_\varphi(K,K)=V(K)$ for any 
$K\in{\cal K}_o^n$, and more generally the Orlicz-Minkowski inequality
asserts that
\begin{align}\label{eq.Orlicz-Mink_ineq}
 \frac{V_\varphi(K,L)}{V(K)}\geq
 \varphi\left(\left(\frac{V(L)}{V(K)}\right)^{1/n}\right),
\end{align}
for all $K,L\in{\cal K}_{o}^n$. If $\varphi$ is strictly convex then 
equality holds if and only if $K$ and $L$ are dilates of each other. 
We also have a Orlicz-Brunn-Minkowski inequality, which relates the volume 
with the Orlicz linear combination:
\begin{align}\label{eq.Orlicz-B-M_ineq}
 1\geq \varphi\left(\left(\frac{V(K)}{V(K\os L)}\right)^{1/n}\right)
 +\varphi\left(\left(\frac{V(L)}{V(K\os L)}\right)^{1/n}\right),
\end{align}
for all $K,L\in{\cal K}_o^n$, and if $\varphi$ is strictly convex then 
equality holds if and only if $K$ and $L$ are dilates of each other. 
We refer to \cite{GHW1} for any further details on the Orlicz-Brunn-Minkowski 
theory.

\medskip

\noindent  {\sc Affine quermassintegrals}.

\smallskip

Lutwak \cite{Lutwak1} defined the affine quermassintegrals of a convex body 
$K$ in $\IR^n$ by
\begin{align*}
 \Phi_{n-j}(K) = \frac{\omega_n}{\omega_j}\left(\int_{G_{n,j}}
 \vol_j\big(K|\xi\big)^{-n}\,d\nu_{n,j}(\xi)\right)^{-\frac{1}{n}},
\end{align*}
for $1<j<n$, while $\Phi_0(K)=V(K)$ and $\Phi_n(K)=\omega_n$. Here and 
for the rest of this note, $\nu_{n,j}$ is the Haar probability measure on 
the Grassmannian manifold $G_{n,j}$ of the $j$-dimensional subspaces of 
$\IR^n$. The terminology ``affine'' was justified a few years later 
by Grinberg \cite{Grinberg}, how showed that these quantities are actually 
invariant under volume preserving linear transformations. Lutwak proved a 
Brunn-Minkowski inequality for the affine quermassintegrals:
\begin{align}\label{eq.B-M-AQ}
 \Phi_{n-j}(K+L)^{1/j}\geq\Phi_{n-j}(K)^{1/j}+\Phi_{n-j}(L)^{1/j},
\end{align}
and conjectured in  \cite{Lutwak8} that they satisfy the inequalities
\begin{align*}
 \omega_n^{n-k}\Phi_{n-j}(K)^{k} \geq \omega_n^{n-j}\Phi_{n-k}(K)^{j},
\end{align*}
for all $0<j<k\leq n$. In particular, Lutwak asks if the following 
inequalities holds true:
\begin{align}\label{eq.Lutwak-conjecture}
 \Phi_{n-j}(K)^n\geq \omega_n^{n-j}\,V(K)^{j},
\end{align}
for every $0\leq j <n$, with equality if and only if $K$ is an ellipsoid.
Most of these conjectures remain open. Note that two cases of 
\eqref{eq.Lutwak-conjecture} follow  from classical results: when $j=n-1$ 
this inequality is the Petty projection inequality and when $j=1$ and $K$ is 
origin symmetric then \eqref{eq.Lutwak-conjecture} is the Blaschke-Santal\'{o} 
inequality. For more details we refer to the book of Gardner \cite{Gardner} 
(see also \cite{DP} and \cite{PP}, where an asymptotic version of 
\eqref{eq.Lutwak-conjecture} is proved).

\medskip

In \cite{LZX}, an extension of affine quermassintegrals was considered, where
the authors defined the Orlicz mixed affine quermassintegrals for 
$K,L\in{\cal K}_o^n$, $\varphi\in\mathscr{C}$ and $0 < j \leq n$, by
\begin{align*}
 \varPhi_{\varphi,n-j}(K,L):=\frac{\omega_n}{\omega_j}\left(\int_{G_{n,j}}
 V_\varphi^{(j)}\big(K|\xi,L|\xi\big)^{-n}\,d\nu_{n,j}(\xi)\right)^{-\frac1n}.
\end{align*}
These quantities are invariant under volume preserving linear transformations,
and provide a generalization of affine quermassintegrals to the Orlicz spaces.

\medskip

In this note, drawing our inspiration from \cite{Zhao}, we introduce the 
following alternative definition. For any $\varphi\in\mathscr{C}$ and $0 < j \leq n$,
we define the \textit{Orlicz mixed affine quermassintegrals} of 
$K,L\in{\cal K}_o^n$, by
\begin{align*}
 \Phi_{\varphi,n-j}(K,L):=\frac{\omega_n}{\omega_j}\left(\int_{G_{n,j}}
 V_\varphi^{(j)}\big(K|\xi,L|\xi\big)\,\vol_j
 \big(K|\xi\big)^{-n-1}\,d\nu_{n,j}(\xi)\right)^{-\frac1n}.
\end{align*}
Our definition of Orlicz mixed affine quermassintegrals combine and extend 
both Lutwak's concepts of affine quermassintegrals and Orlicz mixed volume. 

In section \ref{sec.OMAQ} we prove their invariance under volume 
preserving linear transformations and we show a first variation 
formula for them (see Proposition \ref{prop.1st_variation}) with respect 
to the Orlicz linear combination:
\begin{align}\label{eq.1st_variation_intro}
  \Phi_{\varphi,n-j}(K,L)^{-n}=
  \frac{\varphi'(1^-)}{j\,\Phi_{n-j}(K)^{n+1}}\,\lim_{\varepsilon\to0^+}
  \frac{\Phi_{n-j}(K+_\varphi\varepsilon\cdot\sub{^\varphi} L)
  -\Phi_{n-j}(K)}{\varepsilon}.
 \end{align}
 This can be seen as a generalization of the corresponding formula 
\eqref{eq.Orlicz-mv_1} for the Orlicz mixed volume.

 In section \ref{sec.O-M_OMAQ} we use H\"{o}lder's inequality to derive 
the following Orlicz-Minkowski inequality for the Orlicz mixed affine 
quermassintegrals
\begin{align}\label{eq.O-M_AQ_intro}
  \left(\frac{\Phi_{\varphi,n-j}(K,L)}{\Phi_{n-j}(K)}\right)^{-n}\geq 
  \varphi\left(\left(\frac{\Phi_{n-j}(L)}{\Phi_{n-j}(K)}\right)^{1/j}\right),
 \end{align}
which generalize the Orlicz-Minkowski inequality \eqref{eq.Orlicz-Mink_ineq}
for the mixed volume.
 
In the last section \ref{sec.O-B-M_AQ}, we prove an Orlicz-Brunn-Minkowski 
inequality for Lutwak's affine quermassintegrals:
\begin{align}\label{eq.O-B-M_AQ_intro}
  1\geq 
  \varphi\left(\left(\frac{\Phi_{n-j}(K)}
  {\Phi_{n-j}(K\os\varepsilon\cdot L)}\right)^{1/j}\right)
  +\varepsilon
  \varphi\left(\left(\frac{\Phi_{n-j}(L)}
  {\Phi_{n-j}(K\os\varepsilon\cdot L)}\right)^{1/j}\right).
 \end{align}
Note that \eqref{eq.O-B-M_AQ_intro} generalize the corresponding 
Brunn-Minkowski inequalities for Orlicz mixed volumes 
\eqref{eq.Orlicz-Mink_ineq}, and for affine quermassintegrals 
\eqref{eq.B-M-AQ}.

\medskip

\noindent {\bf Acknowledgments}. 
The author is supported by the Austrian Science Fund (FWF): 
Lise Meitner [M 2338-N35].

\section{Orlicz Mixed Affine Quermassintegrals}\label{sec.OMAQ}

\begin{definition}[Orlicz mixed Affine Quermassintegrals]
\label{def.O-Aff-Querm}
 The \textit{Orlicz mixed affine quermassintegrals} are defined for any 
 $K,L\in{\cal K}_{o}^n$, $\varphi\in\mathscr{C}$ and $1\leq j \leq n$, by
 \begin{align}\label{eq.O-Aff-Querm}
 \Phi_{\varphi,n-j}(K,L):=\frac{\omega_n}{\omega_j}\left(\int_{G_{n,j}}
 V_\varphi^{(j)}\big(K|\xi,L|\xi\big)\,\vol_j
 \big(K|\xi\big)^{-n-1}\,d\nu_{n,j}(\xi)\right)^{-\frac1n}.
 \end{align}
\end{definition}

\begin{lemma}
 For any $K\in{\cal K}_{o}^n$, $\lambda>0$ and all $1\leq j \leq n$,
 we have that
 \begin{align}\label{eq.K,K}
  \Phi_{\varphi,n-j}(K,\lambda K)=\varphi(\lambda)^{-1/n}\,\Phi_{n-j}(K).
 \end{align}
\end{lemma}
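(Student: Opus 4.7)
The plan is to reduce the identity to the corresponding fact for Orlicz mixed volumes applied fiberwise inside each projection, and then factor out the resulting constant from the integral over the Grassmannian.

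First, I would record the elementary observation that projection commutes with dilation, so $(\lambda K)|\xi = \lambda(K|\xi)$ for every $\xi\in G_{n,j}$. This reduces the entire computation to understanding $V_\varphi^{(j)}(M,\lambda M)$ for a generic convex body $M = K|\xi$ in the $j$-dimensional space $\xi$.

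Next, I would compute $V_\varphi^{(j)}(M,\lambda M)$ directly from the integral representation \eqref{eq.Orlicz-mv_1} applied in dimension $j$. Since $h_{\lambda M}/h_M \equiv \lambda$ on $\IS^{j-1}\cap\xi$, the function $\varphi$ comes out of the integral as the constant $\varphi(\lambda)$, leaving
\begin{align*}
 V_\varphi^{(j)}(M,\lambda M)
 = \varphi(\lambda)\cdot\frac{1}{j}\int_{\IS^{j-1}\cap\xi} h_M(u)\,dS(M,u)
 = \varphi(\lambda)\,\vol_j(M),
\end{align*}
where the last equality is the standard identity $V(M,\ldots,M) = \vol_j(M)$ recovered from the mixed-volume integral. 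Taking $M = K|\xi$, this gives $V_\varphi^{(j)}(K|\xi,\lambda(K|\xi)) = \varphi(\lambda)\,\vol_j(K|\xi)$.

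Finally, I would substitute this into Definition \ref{def.O-Aff-Querm}. The integrand becomes $\varphi(\lambda)\,\vol_j(K|\xi)^{-n}$; since $\varphi(\lambda)$ is independent of $\xi$, it factors out of the Grassmannian integral, and pulling the power $-1/n$ through yields
\begin{align*}
 \Phi_{\varphi,n-j}(K,\lambda K)
 = \varphi(\lambda)^{-1/n}\cdot\frac{\omega_n}{\omega_j}
   \left(\int_{G_{n,j}}\vol_j(K|\xi)^{-n}\,d\nu_{n,j}(\xi)\right)^{-1/n}
 = \varphi(\lambda)^{-1/n}\,\Phi_{n-j}(K),
\end{align*}
which is precisely \eqref{eq.K,K}. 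There is no real obstacle here; the only point requiring mild care is the fiberwise application of the Orlicz mixed volume formula \eqref{eq.Orlicz-mv_1} inside the subspace $\xi$, which is legitimate because both $K|\xi$ and $\lambda(K|\xi)$ belong to $\mathcal{K}_o^j(\xi)$.
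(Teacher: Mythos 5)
Your proof is correct and follows essentially the same route as the paper: compute $V_\varphi(M,\lambda M)=\varphi(\lambda)\,V(M)$ from the integral representation \eqref{eq.Orlicz-mv_1}, apply it fiberwise to $M=K|\xi$, and factor the constant $\varphi(\lambda)$ out of the Grassmannian integral. If anything, you are slightly more explicit than the paper in noting $(\lambda K)|\xi=\lambda(K|\xi)$ and carrying out the mixed-volume identity directly inside the $j$-dimensional subspace, whereas the paper proves it in ambient dimension $n$ and applies the $j$-dimensional version tacitly.
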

\begin{proof}
 By the definition of Orlicz mixed volume \eqref{eq.Orlicz-mv_1}, 
 we have that
 \begin{align*}
  V_\varphi(K,\lambda K)
  &=\frac1n\int_{\IS^{n-1}}
   \varphi\left(\frac{h_{\lambda K}(u)}{h_K(u)}\right)h_K(u)\,dS(K,u) \\
  &=\varphi(\lambda)\,\frac1n\int_{\IS^{n-1}} h_K(u)\,dS(K,u) \\
  &=\varphi(\lambda)\,V_1(K,K)=\varphi(\lambda)\,V(K).
 \end{align*}
 Thus,
 \begin{align*}
  \Phi_{\varphi,n-j}(K,\lambda K)
  &=\frac{\omega_n}{\omega_j}\left(\int_{G_{n,j}}
    V_\varphi^{(j)}\big(K|\xi,\lambda K|\xi\big)
    \,\vol_j\big(K|\xi\big)^{-n-1}
    \,d\nu_{n,j}(\xi)\right)^{-\frac1n} \\
  &=\varphi(\lambda)^{-\frac1n}\,\Phi_{n-j}(K).
 \end{align*}
\end{proof}

\medskip

 Note that $\varphi(1)=1$, and so we have that
$\Phi_{\varphi,n-j}(K,K)=\Phi_{n-j}(K)$, for all $K\in{\cal K}_o^n$, and 
so in that sense, Orlicz mixed affine quermassintegrals provides a natural 
extension of Lutwak's affine quermassintegrals in the Orlicz setting.

\begin{lemma}
 Let $\varphi\in\mathscr{C}$, $K,L\in{\cal K}_{o}^n$, $\varepsilon>0$ 
 and $\xi\in G_{n,j}$. Then,
 \begin{align*}
  (K+_\varphi\varepsilon\cdot L)\big|\xi 
  = K|\xi +_\varphi \varepsilon\cdot L|\xi.
 \end{align*}
\end{lemma}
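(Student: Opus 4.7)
The plan is to reduce the equality of convex bodies to an equality of support functions restricted to $\xi$, since a convex body is uniquely determined by its support function. The two ingredients are the standard fact that projections commute with support functions in the sense that $h_{M|\xi}(x) = h_M(x)$ for every $x \in \xi$, together with the defining formula \eqref{eq.Orlicz_sum} of the Orlicz linear combination.

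Concretely, I would fix $x \in \xi$ and compute both sides. For the left-hand side,
\begin{align*}
 h_{(K+_\varphi\varepsilon\cdot L)|\xi}(x)
 &= h_{K+_\varphi\varepsilon\cdot L}(x) \\
 &= \inf\left\{\lambda>0 : \varphi\!\left(\tfrac{h_K(x)}{\lambda}\right)
   + \varepsilon\,\varphi\!\left(\tfrac{h_L(x)}{\lambda}\right) \leq 1\right\},
\end{align*}
using first the projection identity and then \eqref{eq.Orlicz_sum}. For the right-hand side, applying \eqref{eq.Orlicz_sum} directly to $K|\xi$ and $L|\xi$ (which lie in $\xi$, and so have support functions defined there) gives
\begin{align*}
 h_{K|\xi +_\varphi\varepsilon\cdot L|\xi}(x)
 = \inf\left\{\lambda>0 : \varphi\!\left(\tfrac{h_{K|\xi}(x)}{\lambda}\right)
   + \varepsilon\,\varphi\!\left(\tfrac{h_{L|\xi}(x)}{\lambda}\right) \leq 1\right\},
\end{align*}
and the same projection identity applied to $K$ and $L$ separately yields $h_{K|\xi}(x)=h_K(x)$ and $h_{L|\xi}(x)=h_L(x)$, so the two infima coincide.

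Since the support functions of $(K+_\varphi \varepsilon\cdot L)|\xi$ and $K|\xi +_\varphi \varepsilon\cdot L|\xi$ agree on $\xi$, and both bodies lie in $\xi$, they are equal as convex sets. The only point worth being careful about is that $K|\xi,L|\xi \in \mathcal{K}_o^j(\xi)$, so that the Orlicz sum on the right is well-defined; this follows because $K,L \in \mathcal{K}_o^n$ forces their projections to contain the origin in their relative interior in $\xi$. There is no real obstacle here — the lemma is essentially a bookkeeping statement that the Orlicz sum is defined pointwise in the angular variable, and pointwise operations commute with orthogonal projection.
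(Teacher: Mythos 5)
Your proof is correct and follows essentially the same route as the paper: reduce to support functions on $\xi$ via the projection identity $h_{Q|\xi}(u)=h_Q(u)$ for $u\in\xi$, then match the defining formula for the Orlicz sum on both sides. The only cosmetic difference is that you work with the infimum form \eqref{eq.Orlicz_sum} while the paper uses the equivalent implicit equation \eqref{eq.Orlicz_sum_*}; the substance is identical.
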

\begin{proof}
 For every $u\in\IS^{n-1}\cap\xi$, we have that
 \begin{align}
  h_Q(u)=h_{Q|\xi}(u),
 \end{align}
 for every $Q\in{\cal K}_o^n$. Thus, by \eqref{eq.Orlicz_sum_*} we have
 that for every $u\in\IS\cap\xi$
 \begin{align*}
  &\varphi\left(\frac{h_{K|\xi}(u)}
   {h_{(K+_\varphi\varepsilon\cdot L)|\xi}(u)}\right)
   +\varepsilon\,\varphi\left(\frac{h_{L|\xi}(u)}
   {h_{(K+_\varphi\varepsilon\cdot L)|\xi}(u)}\right) \\
  &=\varphi\left(\frac{h_{K}(u)}
   {h_{K+_\varphi\varepsilon\cdot L}(u)}\right)
   +\varepsilon\,\varphi\left(\frac{h_{L}(u)}
   {h_{K+_\varphi\varepsilon\cdot L}(u)}\right)=1,
 \end{align*}
 which means that
 $(K+_\varphi\varepsilon\cdot L)|\xi=K|\xi+_\varphi\varepsilon\cdot L|\xi$.
\end{proof}

Next we prove a first variation formula for the Orlicz mixed 
affine quermassintegrals with respect to the Orlicz addition.

\begin{proposition}\label{prop.1st_variation}
 Let $\varphi\in\mathscr{C}$, $K,L\in{\cal K}_{o}^n$ 
 and $1\leq j \leq n$. Then,
 \begin{align}\label{eq.1st_variation}
  \Phi_{n-j}(K)^{n+1}\,\Phi_{\varphi,n-j}(K,L)^{-n}=
  \frac{\varphi'(1^-)}{j}\,\lim_{\varepsilon\to0^+}
  \frac{\Phi_{n-j}(K+_\varphi\varepsilon\cdot L)
  -\Phi_{n-j}(K)}{\varepsilon}.
 \end{align}
\end{proposition}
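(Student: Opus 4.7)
The strategy is to differentiate the integral representation of $\Phi_{n-j}^{-n}$ term-by-term in $\xi$ using the first variation formula for the Orlicz mixed volume, then interchange limit and integral, and finally apply the chain rule.

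First I would rewrite
\[
 \Phi_{n-j}(K+_\varphi\varepsilon\cdot L)^{-n}
 =\left(\frac{\omega_n}{\omega_j}\right)^{-n}\!\!\int_{G_{n,j}}\!
 \vol_j\!\big((K+_\varphi\varepsilon\cdot L)|\xi\big)^{-n}\,d\nu_{n,j}(\xi),
\]
and invoke the preceding projection lemma to replace $(K+_\varphi\varepsilon\cdot L)|\xi$ by $K|\xi+_\varphi\varepsilon\cdot L|\xi$ inside the integrand. For each fixed $\xi\in G_{n,j}$, formula \eqref{eq.Orlicz-mv_1_*} applied in the $j$-dimensional ambient space $\xi$ gives
\[
 \lim_{\varepsilon\to 0^+}
 \frac{\vol_j(K|\xi+_\varphi\varepsilon\cdot L|\xi)-\vol_j(K|\xi)}{\varepsilon}
 =\frac{j}{\varphi'(1^-)}\,V_{\varphi}^{(j)}(K|\xi,L|\xi),
\]
so a chain-rule computation on $t\mapsto t^{-n}$ at $t=\vol_j(K|\xi)$ yields, pointwise in $\xi$,
\[
 \lim_{\varepsilon\to 0^+}
 \frac{\vol_j(K|\xi+_\varphi\varepsilon\cdot L|\xi)^{-n}-\vol_j(K|\xi)^{-n}}{\varepsilon}
 =-\frac{nj}{\varphi'(1^-)}\,V_{\varphi}^{(j)}(K|\xi,L|\xi)\,\vol_j(K|\xi)^{-n-1}.
\]

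Next I would interchange the limit with the Grassmannian integral. Multiplying by the constant $(\omega_n/\omega_j)^{-n}$ and recognizing the right-hand side as $-\tfrac{nj}{\varphi'(1^-)}\,\Phi_{\varphi,n-j}(K,L)^{-n}$, this produces the right derivative at $\varepsilon=0^+$:
\[
 \frac{d}{d\varepsilon}\bigg|_{0^+}\!\Phi_{n-j}(K+_\varphi\varepsilon\cdot L)^{-n}
 =-\frac{nj}{\varphi'(1^-)}\,\Phi_{\varphi,n-j}(K,L)^{-n}.
\]
Finally, one more chain rule, writing $\Phi_{n-j}=(\Phi_{n-j}^{-n})^{-1/n}$ and using $\Phi_{n-j}(K+_\varphi\varepsilon\cdot L)\to\Phi_{n-j}(K)$ as $\varepsilon\to 0^+$ (a consequence of \eqref{eq.O_lim} and continuity of the affine quermassintegrals), converts this into the claimed identity, after dividing through by $-n\Phi_{n-j}(K)^{-n-1}$ and rearranging.

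The main technical obstacle is justifying the interchange of limit and integral over $G_{n,j}$. Since $K,L\in{\cal K}_o^n$, there exist $r,R>0$ with $rB_2^n\subset K,L\subset RB_2^n$, hence $rB_2^n\cap\xi\subset K|\xi,L|\xi\subset RB_2^n\cap\xi$ uniformly in $\xi\in G_{n,j}$. From \eqref{eq.Orlicz_sum} and the monotonicity $h_K\leq h_{K+_\varphi\varepsilon\cdot L}$, the bodies $K|\xi+_\varphi\varepsilon\cdot L|\xi$ are sandwiched between $rB_2^n\cap\xi$ and, say, $2RB_2^n\cap\xi$ for all $\xi$ and all sufficiently small $\varepsilon$; this gives a uniform upper bound for the nonnegative difference quotient and also keeps $V_\varphi^{(j)}(K|\xi,L|\xi)\,\vol_j(K|\xi)^{-n-1}$ uniformly bounded in $\xi$. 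A dominated-convergence argument (or the monotonicity of the quotient that follows from the concavity consequences of the Orlicz–Brunn–Minkowski inequality in each subspace) then legitimizes passing the limit inside the integral.
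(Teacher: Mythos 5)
Your argument is correct and follows essentially the same route as the paper: you write $\Phi_{n-j}(K+_\varphi\varepsilon\cdot L)^{-n}$ as (a constant times) the Grassmannian integral of $\vol_j\big((K+_\varphi\varepsilon\cdot L)|\xi\big)^{-n}$, pass the derivative inside the integral, use the projection lemma together with \eqref{eq.Orlicz-mv_1_*} applied in the $j$-dimensional subspace $\xi$, apply the chain rule for $t\mapsto t^{-n}$ pointwise, recognize the resulting integral as $\Phi_{\varphi,n-j}(K,L)^{-n}$, and finish with one more chain rule for $t\mapsto t^{-1/n}$. The paper does exactly these steps, though it leaves the use of the projection lemma implicit and does not address the interchange of limit and integral; your dominated-convergence discussion is a welcome supplement to what the paper takes for granted, even if the monotonicity-of-the-difference-quotient remark would itself need a little more care to be made fully rigorous.
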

\begin{proof}
 By \eqref{eq.Orlicz-mv_1_*} we have
 \begin{align*}
  &\left.\frac{d}{d\varepsilon}\right|_{\varepsilon=0^+}
    \int_{G_{n,j}}\vol_j\big((K+_\varphi\varepsilon\cdot L)|\xi
    \big)^{-n}\,d\nu_{n,j}(\xi) \\
  &=-n\int_{G_{n,j}} \vol_j(K|\xi)^{-n-1}\,
    \left.\frac{d}{d\varepsilon}\right|_{\varepsilon=0^+}
    \vol_j\big((K+_\varphi\varepsilon\cdot L)
    |\xi\big)\,d\nu_{n,j}(\xi) \\
  &=-\frac{jn}{\varphi'(1^-)}\int_{G_{n,j}}
    V_\varphi^{(j)}\big(K|\xi,L|\xi\big)
    \,\vol_j \big(K|\xi\big)^{-n-1}\,d\nu_{n,j}(\xi) \\
  &=-\frac{jn}{\varphi'(1^-)}
    \left(\frac{\omega_n}{\omega_j}\right)^n\,\Phi_{\varphi,n-j}(K,L)^{-n}.
 \end{align*}
 Thus,
 \begin{align*}
  &\lim_{\varepsilon\to0^+}
  \frac{\Phi_{n-j}(K+_\varphi\varepsilon\cdot\sub{^\varphi} L)
  -\Phi_{n-j}(K)}{\varepsilon} \\
  &\quad=\left.\frac{d}{d\varepsilon}
   \Phi_{n-j}(K+_\varphi\varepsilon\cdot L)\right|_{\varepsilon=0^+} \\
  &\quad=\frac{\omega_n}{\omega_j}\,
    \left.\frac{d}{d\varepsilon}\right|_{\varepsilon=0^+} 
    \left(\int_{G_{n,j}}\vol_j\big((K+_\varphi\varepsilon\cdot L)|\xi
    \big)^{-n}\,d\nu_{n,j}(\xi)\right)^{-\frac1n} \\
  &\quad=-\frac{\omega_n}{\omega_j}\,\frac1n\left(\int_{G_{n,j}}
    \vol_j\big(K|\xi\big)^{-n}\,d\nu_{n,j}(\xi)\right)^{-\frac1n-1} \\
  &\qquad\quad \left.\frac{d}{d\varepsilon}\right|_{\varepsilon=0^+}
    \int_{G_{n,j}}\vol_j\big((K+_\varphi\varepsilon\cdot L)|\xi
    \big)^{-n}\,d\nu_{n,j}(\xi) \\
  &\quad=\frac{j}{\varphi'(1^-)}\left[
    \frac{\omega_n}{\omega_j}\,\left(
    \int_{G_{n,j}}\vol_j\big(K|\xi\big)^{-n}
    \,d\nu_{n,j}(\xi)\right)^{-\frac1n}\right]^{n+1}
    \,\Phi_{\varphi,n-j}(K,L)^{-n} \\  
  &\quad=\frac{j}{\varphi'(1^-)}\;\Phi_{n-j}(K)^{n+1}\;\Phi_{\varphi,n-j}(K,L)^{-n}.
 \end{align*}
\end{proof}

\medskip

\noindent {\bf Note}. Definition \ref{def.O-Aff-Querm} for $j=n$ gives that
\begin{align}\label{eq.OMAQ_0}
 \Phi_{\varphi,0}(K,L)^{-n}= V_{\varphi}(K,L)\,V(K)^{-n-1}.
\end{align}
So in that case, formula \eqref{eq.1st_variation} reads exactly 
as the corresponding first variation formula \eqref{eq.Orlicz-mv_1_*} for the 
Orlicz mixed volume.

\medskip

The following lemma comes from \cite[Theorem 5.2]{GHW1}. 

\begin{lemma}\label{lemma.OMAQ-linear}
 Let $\varphi\in\mathscr{C}$, $K,L\in{\cal K}_{o}^n$, $\varepsilon>0$ 
 and $T\in GL(n)$. Then,
 \begin{align*}
  T(K+_\varphi\varepsilon\cdot L) = TK +_\varphi \varepsilon\cdot TL.
 \end{align*}
\end{lemma}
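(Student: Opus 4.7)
The plan is to verify the identity by comparing support functions on both sides, exploiting the implicit equation \eqref{eq.Orlicz_sum_*} that characterizes the Orlicz sum and the standard transformation rule $h_{TM}(x)=h_M(T^{\,t}x)$ for any $M\in{\cal K}_o^n$ and $T\in GL(n)$. Since a convex body is uniquely determined by its support function, equality of the two sets will follow once we show that $h_{T(K+_\varphi\varepsilon\cdot L)}$ and $h_{TK+_\varphi\varepsilon\cdot TL}$ agree pointwise on $\IR^n$.

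First I would fix $x\in\IR^n\setminus\{0\}$ and set $\lambda:=h_{T(K+_\varphi\varepsilon\cdot L)}(x)=h_{K+_\varphi\varepsilon\cdot L}(T^{\,t}x)$. Applying the implicit characterization \eqref{eq.Orlicz_sum_*} at the point $T^{\,t}x$, the value $\lambda$ satisfies
\begin{align*}
 \varphi\!\left(\frac{h_{K}(T^{\,t}x)}{\lambda}\right)
 +\varepsilon\,\varphi\!\left(\frac{h_{L}(T^{\,t}x)}{\lambda}\right)=1.
\end{align*}
Rewriting $h_K(T^{\,t}x)=h_{TK}(x)$ and $h_L(T^{\,t}x)=h_{TL}(x)$, the same $\lambda$ satisfies
\begin{align*}
 \varphi\!\left(\frac{h_{TK}(x)}{\lambda}\right)
 +\varepsilon\,\varphi\!\left(\frac{h_{TL}(x)}{\lambda}\right)=1.
\end{align*}

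Next I would invoke the fact that, because $\varphi\in\mathscr{C}$ is strictly increasing and vanishes at $0$, the map $\lambda\mapsto \varphi(h_{TK}(x)/\lambda)+\varepsilon\,\varphi(h_{TL}(x)/\lambda)$ is strictly decreasing on $(0,\infty)$ with limits $+\infty$ at $0^+$ and $0$ at $+\infty$. Hence the equation above has a unique positive solution, which by \eqref{eq.Orlicz_sum} coincides with $h_{TK+_\varphi\varepsilon\cdot TL}(x)$. Therefore $h_{T(K+_\varphi\varepsilon\cdot L)}(x)=h_{TK+_\varphi\varepsilon\cdot TL}(x)$ for every $x$, proving the claimed equality of convex bodies.

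The only subtle point is the well-posedness of the implicit definition, i.e.\ verifying that the defining equation has a unique solution so that the identification via support functions is legitimate; this however is immediate from the monotonicity of $\varphi$ and was already used in passing from \eqref{eq.Orlicz_sum} to \eqref{eq.Orlicz_sum_*}. Beyond that, the argument is purely a bookkeeping exercise involving the linearity rule for support functions, so no further obstacle is expected.
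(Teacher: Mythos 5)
Your proof is correct and follows essentially the same route as the paper: both reduce to the transformation rule $h_{TM}(x)=h_M(T^{*}x)$ and then match the defining relation of the Orlicz sum, the only difference being that the paper works directly with the infimum in \eqref{eq.Orlicz_sum} while you pass through the equivalent implicit equation \eqref{eq.Orlicz_sum_*} together with uniqueness of its solution. One small caveat: $\mathscr{C}$ only requires $\varphi$ to be increasing (not strictly), so the justification for uniqueness should rest on the fact that, since $h_{TK}(x),h_{TL}(x)>0$, at least one argument of $\varphi$ lies in the region where $\varphi$ is strictly increasing near the solution, which is exactly what makes \eqref{eq.Orlicz_sum} and \eqref{eq.Orlicz_sum_*} equivalent.
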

\begin{proof}
 Note that for any $u\in\IS^{n-1}$ and $Q\in{\cal K}_o^n$,
 \begin{align*}
  h_{TQ}(u)=h_Q(T^*u).
 \end{align*}
 Thus, by definition \eqref{eq.Orlicz_sum} for the Orlicz linear combination, 
 we have that for every $u\in\IS^{n-1}$.
 \begin{align*}
  h_{TK +_\varphi \varepsilon\cdot TL}(u)
  &=\inf\left\{ \lambda>0:\,
    \varphi\left(\frac{h_{TK}(u)}{\lambda}\right)
    +\varepsilon\,\varphi\left(\frac{h_{TL}(u)}
    {\lambda}\right)\leq 1\right\} \\
  &=\inf\left\{ \lambda>0:\,
    \varphi\left(\frac{h_{K}(T^*u)}{\lambda}\right)
    +\varepsilon\,\varphi\left(\frac{h_{L}(T^*u)}
    {\lambda}\right)\leq 1\right\} \\
  &=h_{K +_\varphi \varepsilon\cdot L}(T^*u) 
   =h_{T(K +_\varphi \varepsilon\cdot L)}(u)
 \end{align*}
\end{proof}

Using the first variation formula \eqref{eq.1st_variation}, we can easily
see that Orlicz mixed affine quermassintegrals are invariant under volume 
preserving linear transportations.

\begin{proposition}\label{prop.OMAQ-affine}
 Let $\varphi\in\mathscr{C}$, $K,L\in{\cal K}_{o}^n$, $1\leq j \leq n$ 
 and $T\in SL(n)$. Then,
 \begin{align*}
  \Phi_{\varphi,n-j}(TK,TL)=\Phi_{\varphi,n-j}(K,L).
 \end{align*}
\end{proposition}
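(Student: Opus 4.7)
The plan is to apply the first variation formula \eqref{eq.1st_variation} at the pair $(TK, TL)$ and then push the transformation $T$ through every object that appears on the right-hand side, using Lemma \ref{lemma.OMAQ-linear} together with Grinberg's theorem on the $SL(n)$-invariance of Lutwak's affine quermassintegrals $\Phi_{n-j}$ (cited in the introduction).

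First I would write down
\begin{align*}
 \Phi_{n-j}(TK)^{n+1}\,\Phi_{\varphi,n-j}(TK,TL)^{-n}
 =\frac{\varphi'(1^-)}{j}\,\lim_{\varepsilon\to0^+}
 \frac{\Phi_{n-j}(TK+_\varphi\varepsilon\cdot TL)-\Phi_{n-j}(TK)}
 {\varepsilon}.
\end{align*}
By Lemma \ref{lemma.OMAQ-linear}, the Orlicz sum commutes with $T$, so $TK+_\varphi\varepsilon\cdot TL = T(K+_\varphi\varepsilon\cdot L)$; and by Grinberg's theorem, $\Phi_{n-j}(TM)=\Phi_{n-j}(M)$ for every $M\in{\cal K}_o^n$ and every $T\in SL(n)$. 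Applying this to each of $TK$, $TL$, and $T(K+_\varphi\varepsilon\cdot L)$, the right-hand side collapses to
\begin{align*}
 \frac{\varphi'(1^-)}{j}\,\lim_{\varepsilon\to0^+}
 \frac{\Phi_{n-j}(K+_\varphi\varepsilon\cdot L)-\Phi_{n-j}(K)}{\varepsilon}
 =\Phi_{n-j}(K)^{n+1}\,\Phi_{\varphi,n-j}(K,L)^{-n},
\end{align*}
again by \eqref{eq.1st_variation}. Since $\Phi_{n-j}(TK)=\Phi_{n-j}(K)>0$, dividing by $\Phi_{n-j}(K)^{n+1}$ and taking $n$-th roots yields the claimed equality.

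There is essentially no hard step here: the proof is just the composition of two ingredients that are already in place, namely the variational identity \eqref{eq.1st_variation} and the compatibility of Orlicz linear combination with linear maps (Lemma \ref{lemma.OMAQ-linear}). The only delicate point worth flagging in the write-up is that Grinberg's affine invariance is used three times on the right-hand side (at $TK$, at $TL$ implicitly through the Orlicz sum, and at $T(K+_\varphi\varepsilon\cdot L)$), and that this requires $\det T = 1$; had $T$ been merely in $GL(n)$, the two factors of $\Phi_{n-j}(TK)^{n+1}$ appearing on the two sides would still be equal to each other but would differ from $\Phi_{n-j}(K)^{n+1}$, which is exactly why the statement restricts to $SL(n)$.
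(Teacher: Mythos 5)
Your proof is correct and follows the paper's own argument exactly: apply the first variation formula of Proposition \ref{prop.1st_variation} to $(TK,TL)$, commute $T$ with the Orlicz linear combination via Lemma \ref{lemma.OMAQ-linear}, invoke Grinberg's $SL(n)$-invariance of $\Phi_{n-j}$, and read off the conclusion. (Your closing parenthetical about $GL(n)$ is slightly off as stated --- for general $T\in GL(n)$ the determinant factors from $\Phi_{n-j}(TK)^{n+1}$ and from the limit do not cancel, but they do combine to give the expected homogeneity $\Phi_{\varphi,n-j}(TK,TL)=|\det T|^{j/n}\Phi_{\varphi,n-j}(K,L)$ rather than simply breaking the argument --- but this is a side remark and does not affect the proof.)
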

\begin{proof}
 By Proposition \ref{prop.1st_variation}, Lemma \ref{lemma.OMAQ-linear} 
 and the $SL(n)$-invariant of Lutwak's affine quermassintegrals, we get that
 \begin{align*}
  \Phi_{\varphi,n-j}(TK,TL)^{-n}
  &= \Phi_{n-j}(TK)^{-n-1}\,
   \frac{\varphi'(1^-)}{j}\,
   \left.\frac{d}{d\varepsilon}\right|_{\varepsilon=0^+}
   \Phi_{n-j}(TK+_\varphi\varepsilon\cdot\sub{^\varphi} TL) \\
  &=\Phi_{n-j}(K)^{-n-1}\,
   \frac{\varphi'(1^-)}{j}\,
   \left.\frac{d}{d\varepsilon}\right|_{\varepsilon=0^+}
   \Phi_{n-j}(K+_\varphi\varepsilon\cdot\sub{^\varphi} L) \\
  &=\Phi_{\varphi,n-j}(K,L)^{-n}.
 \end{align*}
\end{proof}

\medskip
 
We close the section with a definition, that comes of by choosing $\varphi(t)=t^p$
in \eqref{eq.O-Aff-Querm}.

\begin{definition}
 The \textit{$L_p$ mixed affine quermassintegrals} of $K,L\in{\cal K}_o^n$,
 $p\geq1$, are defined by 
 \begin{align}\label{eq.L_pMAQ}
  \Phi_{p,n-j}(K,L):=\frac{\omega_n}{\omega_j}\left(\int_{G_{n,j}}
  V_p^{(j)}\big(K|\xi,L|\xi\big)\,\vol_j
  \big(K|\xi\big)^{-n-1}\,d\nu_{n,j}(\xi)\right)^{-\frac1n}.
 \end{align}
 In particular, the \textit{mixed affine quermassintegrals} of 
 $K,L\in{\cal K}_o^n$ are defined by
 \begin{align}\label{eq.MAQ}
  \Phi_{1,n-j}(K,L):=\frac{\omega_n}{\omega_j}\left(\int_{G_{n,j}}
  V_1^{(j)}\big(K|\xi,L|\xi\big)\,\vol_j
  \big(K|\xi\big)^{-n-1}\,d\nu_{n,j}(\xi)\right)^{-\frac1n}.
 \end{align}
\end{definition}

\section{Orlicz-Minkowski Inequality \\ for
         Orlicz Mixed Affine Quermassintegrals.}\label{sec.O-M_OMAQ}

In this section we prove an Orlicz-Minkowski inequality for the Orlicz 
mixed affine quermassintegrals. For its proof we use the Orlicz-Minkowski 
inequality \eqref{eq.Orlicz-Mink_ineq} and H\"{o}lder inequality, which we 
quote here for the reader's convenience (see \cite[Theorem 189]{HLP}).

\medskip

\noindent \textbf{Theorem} (H\"{o}lder's inequality).
 Let $f,g:X\to[0,\infty]$ be measurable functions on 
a measure space $(X,\mu)$. For every $p\neq0$ we consider $p'\neq0$ such that 
$\frac1{p}+\frac1{p'}=1$.
\begin{itemize}
 \item[(i)] If $p\geq1$, then
            \begin{align}\label{eq.holder_i}
             \int fg\,d\mu \leq 
             \left(\int f^p\,d\mu\right)^{1/p} 
             \left(\int g^{p'}\,d\mu\right)^{1/p'},
            \end{align}
            with equality if and only if $f^p$ and $g^{p'}$ are proportional.
 \item[(ii)] If $0<p<1$ or $p<0$, then
            \begin{align}\label{eq.holder_ii}
             \int fg\,d\mu \geq 
             \left(\int f^p\,d\mu\right)^{1/p} 
             \left(\int g^{p'}\,d\mu\right)^{1/p'},
            \end{align}
            with equality if and only if $f^p$ and $g^{p'}$ are proportional,
            or $fg\equiv0$. 
\end{itemize}

\medskip
 
\begin{theorem}[Orlicz-Minkowski inequality 
for Orlicz mixed affine quermassintegrals] \label{thm.O-M_I_AQ} 
 Let $K,L\in{\cal K}_{o}^n$, $\varphi\in\mathscr{C}$, and $1\leq j \leq n$. 
 Then,
 \begin{align}\label{eq.O-M_I_AQ}
  \left(\frac{\Phi_{\varphi,n-j}(K,L)}{\Phi_{n-j}(K)}\right)^{-n}\geq 
  \varphi\left(\left(\frac{\Phi_{n-j}(L)}{\Phi_{n-j}(K)}\right)^{1/j}\right).
 \end{align}
 If $\varphi$ is strictly convex, then equality holds 
 if and only if $K$ and $L$ are dilates of each other.
\end{theorem}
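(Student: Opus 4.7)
My plan is to combine three standard ingredients: the Orlicz-Minkowski inequality \eqref{eq.Orlicz-Mink_ineq} applied fibrewise on each $j$-dimensional subspace, Jensen's inequality applied to the convex increasing function $\varphi$, and H\"older's inequality in its ``reverse'' form \eqref{eq.holder_ii} with a negative exponent.

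The first step is to apply \eqref{eq.Orlicz-Mink_ineq} to the projected bodies $K|\xi, L|\xi$ for each $\xi \in G_{n,j}$ (these are $j$-dimensional convex bodies containing the origin in their interior), which yields
\begin{align*}
 V_\varphi^{(j)}\big(K|\xi, L|\xi\big) \geq \vol_j(K|\xi)\,
 \varphi\!\left(\left(\frac{\vol_j(L|\xi)}{\vol_j(K|\xi)}\right)^{1/j}\right).
\end{align*}
Multiplying by $\vol_j(K|\xi)^{-n-1}$ and integrating over $G_{n,j}$, the theorem reduces, after writing $I_K := \int_{G_{n,j}} \vol_j(K|\xi)^{-n}\,d\nu_{n,j}(\xi)$ (and $I_L$ analogously) and using $\Phi_{n-j}(K)^{-n} = (\omega_j/\omega_n)^n I_K$, to the inequality
\begin{align*}
 \frac{1}{I_K}\int_{G_{n,j}} \varphi\!\left(\left(\frac{\vol_j(L|\xi)}{\vol_j(K|\xi)}\right)^{1/j}\right) \vol_j(K|\xi)^{-n}\,d\nu_{n,j}(\xi) \geq \varphi\!\left(\left(\frac{\Phi_{n-j}(L)}{\Phi_{n-j}(K)}\right)^{1/j}\right).
\end{align*}

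For this reduced inequality I would apply Jensen's inequality to $\varphi$ against the probability measure $I_K^{-1}\vol_j(K|\xi)^{-n}\,d\nu_{n,j}(\xi)$: the left-hand side is bounded below by $\varphi(\Lambda)$, where $\Lambda$ is the average of $(\vol_j(L|\xi)/\vol_j(K|\xi))^{1/j}$ against the same probability measure. Using monotonicity of $\varphi$, it then suffices to check $\Lambda \geq (\Phi_{n-j}(L)/\Phi_{n-j}(K))^{1/j} = (I_K/I_L)^{1/(nj)}$, and this is precisely the content of \eqref{eq.holder_ii} with exponent $p = -nj < 0$ (so $p' = nj/(nj+1) \in (0,1)$) applied to $f(\xi) = \vol_j(L|\xi)^{1/j}$ and $g(\xi) = \vol_j(K|\xi)^{-(nj+1)/j}$:
\begin{align*}
 \int_{G_{n,j}} \vol_j(L|\xi)^{1/j}\,\vol_j(K|\xi)^{-n-1/j}\,d\nu_{n,j}(\xi) \geq I_L^{-1/(nj)}\,I_K^{(nj+1)/(nj)},
\end{align*}
and dividing by $I_K$ delivers the required lower bound on $\Lambda$.

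For the equality case with $\varphi$ strictly convex, the reverse H\"older step forces $\vol_j(L|\xi)/\vol_j(K|\xi)$ to be a constant $c > 0$ on $G_{n,j}$, the Jensen step is then automatically an equality, and fibrewise equality in Orlicz-Minkowski forces $L|\xi = c^{1/j}\,K|\xi$ for every $\xi$; since $h_{K|\xi}$ and $h_{L|\xi}$ agree with $h_K$ and $h_L$ on $\IS^{n-1}\cap\xi$, letting $\xi$ vary over $G_{n,j}$ yields $h_L = c^{1/j}\,h_K$ on $\IS^{n-1}$, hence $L$ and $K$ are dilates. The main obstacle I anticipate is tracking the three equality conditions simultaneously and verifying that they combine to force a single global dilation rather than merely fibrewise ones; each inequality is routine in isolation, but extracting a $\xi$-independent constant from fibrewise dilations is the delicate bookkeeping step.
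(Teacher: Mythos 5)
Your proposal is correct and is essentially identical to the paper's own proof: it uses the same three ingredients in the same order, namely the fibrewise Orlicz--Minkowski inequality \eqref{eq.Orlicz-Mink_ineq} on each $\xi\in G_{n,j}$, Jensen's inequality against the probability measure with density proportional to $\vol_j(K|\xi)^{-n}$, and the reverse H\"older inequality \eqref{eq.holder_ii} with the negative exponent (your $p=-nj$ is the paper's $p'$, a cosmetic relabeling). Your treatment of the equality case is likewise the same argument, and in fact spelled out a bit more carefully than in the paper, where one should read $L|\xi=\lambda(\xi)K|\xi$ and $L=\lambda^{1/j}K$ in the final lines.
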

\begin{proof}
 For every $K\in{\cal K}_{o}^n$, we define the Borer probability measure 
 $\mu_{K,n,j}$ on $G_{n,j}$ by
 \begin{align*}
  d\mu_{K,n,j}(\xi)
  =\frac{\vol_j(K|\xi)^{-n}}{\Phi_{n-j}(K)^{-n}}\,d\nu_{n,j}(\xi).
 \end{align*}
 Then, Orlicz-Minkowski inequality \eqref{eq.Orlicz-Mink_ineq}, and Jensen
 inequality for $\mu_{n,j}$, imply
 \begin{align}\label{eq.*}
  \left(\frac{\Phi_{\varphi,n-j}(K,L)}{\Phi_{n-j}(K)}\right)^{-n}
  &=\int_{G_{n,j}}\frac{V_\varphi^{(j)}(K|\xi,L|\xi)}
    {\vol_j(K|\xi)}\,d\mu_{K,n,j}(\xi) \nonumber \\
  &\geq \int_{G_{n,j}}\varphi\left(\left(\frac{\vol_j(L|\xi)}
    {\vol_j(K|\xi)}\right)^{1/j}\right) d\mu_{K,n,j}(\xi) \nonumber \\
  &\geq \varphi\left(\int_{G_{n,j}}\left(\frac{\vol_j(L|\xi)}
    {\vol_j(K|\xi)}\right)^{1/j} d\mu_{K,n,j}(\xi)\right) \nonumber \\
  &=\varphi\left(\frac{\int_{G_{n,j}}
    \vol_j(K|\xi)^{-\frac{jn+1}{j}}\,\vol_j(L|\xi)^\frac1j\,d\nu_{n,j}}
    {\Phi_{n-j}(K)^{-n}}\right).
 \end{align}
 We use H\"older inequality \eqref{eq.holder_ii} on  $G_{n,j}$, with 
 exponents 
 \begin{align*}
  p=\frac{jn}{jn+1} \qquad{\rm and}\qquad p'=-jn<0.
 \end{align*}
 Taking into account that $\varphi$ is increasing, we get that
 \begin{align*}
  &\int_{G_{n,j}} \vol_j(K|\xi)^{-\frac{jn+1}{j}}
   \,\vol_j(L|\xi)^\frac1j\,d\nu_{n,j} \nonumber \\ 
  &\qquad\qquad\geq 
    \left(\int_{G_{n,j}} \vol_j(K|\xi)^{-n}\,d\nu_{n,j}\right)^\frac{jn+1}{jn}\;
    \left(\int_{G_{n,j}} \vol_j(L|\xi)^{-n}\,d\nu_{n,j}\right)^{-\frac{1}{jn}} \\
  &\qquad\qquad= \Phi_{n-j}(K)^{-n-\frac1j}\;\Phi_{n-j}(L)^{\frac1j}.
 \end{align*}
 Thus, by \eqref{eq.*} we have
 \begin{align*}
  \left(\frac{\Phi_{\varphi,n-j}(K,L)}{\Phi_{n-j}(K)}\right)^{-n} 
  \geq\varphi\left(\frac{\Phi_{n-j}(K)^{-n-\frac1j}\;\Phi_{n-j}(L)^{\frac1j}}
    {\Phi_{n-j}(K)^{-n}}\right) 
  =\varphi\left(\left(\frac{\Phi_{n-j}(L)}{\Phi_{n-j}(K)}\right)^{1/j}\right).
 \end{align*}
 
 For the equality condition, note that if $K,L$ are dilates of each other, 
 then it can be easily checked that equality holds in \eqref{eq.O-M_I_AQ}. 
 Conversely, if we assume that $\varphi$ is strictly convex and that 
 equality holds in \eqref{eq.O-M_I_AQ}, then all inequalities in the 
 above proof should hold as equalities. Thus, equality must holds in the 
 Orlicz-Minkowski inequality for $K|\xi$ and $L|\xi$, $\xi\in G_{n,j}$, 
 and so we must have that for every $\xi\in G_{n,j}$ there exist 
 $\lambda(\xi)>0$ such that
 \begin{align}\label{eq.equal_1}
  L|\xi=\lambda(\xi)\,L|\xi \qquad \forall\,\xi\in G_{n,j}.
 \end{align}
 Moreover we must have equality in Jensen's and H\"{o}lder's inequalities.
 This implies that the positive functions $f(\xi):=\vol_j(K|\xi)$ and 
 $g(\xi):=\vol_j(L|\xi)$, $\xi\in G_{n,j}$, must be proportional to each 
 other, i.e., there exists $\lambda>0$ such that 
 \begin{align}\label{eq.equal_2}
  \vol_j(L|\xi)=\lambda\,\vol_j(K|\xi) \qquad \forall\,\xi\in G_{n,j}.
 \end{align}
 By \eqref{eq.equal_1} and \eqref{eq.equal_2} we conclude that $L=\lambda K$.
\end{proof}

Next uniqueness criterion follows directly from Theorem \ref{thm.O-M_I_AQ}.

\begin{proposition}\label{thm.crit}
 Let $\varphi\in\mathscr{C}$ be strictly convex, $1\leq j\leq n$, and 
 $K,L\in{\cal M}^n\subseteq{\cal K}^n_o$. If
 \begin{align}\label{eq.crit_i}
  \Phi_{\varphi,n-j}(M,K)=\Phi_{\varphi,n-j}(M,L),
  \quad \forall\,M\in{\cal M}^n
 \end{align}
 or
 \begin{align}\label{eq.crit_ii}
  \frac{\Phi_{\varphi,n-j}(K,M)}{\Phi_{n-j}(K)}
  =\frac{\Phi_{\varphi,n-j}(L,M)}{\Phi_{n-j}(L)},
  \quad \forall\,M\in{\cal M}^n,
 \end{align}
 then $K=L$.
\end{proposition}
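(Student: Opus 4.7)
The plan is to exploit the normalization $\Phi_{\varphi,n-j}(K,K)=\Phi_{n-j}(K)$ (which follows from \eqref{eq.K,K} with $\lambda=1$ and $\varphi(1)=1$), and then reduce the problem to the equality case of the Orlicz-Minkowski inequality in Theorem \ref{thm.O-M_I_AQ}. Since by hypothesis $K,L\in{\cal M}^n$, both are admissible choices for $M$.

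For case \eqref{eq.crit_i}, I would first substitute $M=K$ into the hypothesis, obtaining $\Phi_{\varphi,n-j}(K,L)=\Phi_{\varphi,n-j}(K,K)=\Phi_{n-j}(K)$. Theorem \ref{thm.O-M_I_AQ} then yields
\begin{align*}
 1=\left(\frac{\Phi_{\varphi,n-j}(K,L)}{\Phi_{n-j}(K)}\right)^{-n}
 \geq \varphi\left(\left(\frac{\Phi_{n-j}(L)}{\Phi_{n-j}(K)}\right)^{1/j}\right).
\end{align*}
Since $\varphi$ is increasing with $\varphi(1)=1$, this forces $\Phi_{n-j}(L)\leq\Phi_{n-j}(K)$. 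Substituting $M=L$ in \eqref{eq.crit_i} and repeating the argument gives the reverse inequality, so $\Phi_{n-j}(K)=\Phi_{n-j}(L)$. Consequently equality must hold in the displayed inequality; by the equality clause of Theorem \ref{thm.O-M_I_AQ} (using that $\varphi$ is strictly convex), $K$ and $L$ are dilates of each other, and combined with $\Phi_{n-j}(K)=\Phi_{n-j}(L)$ (and the positive homogeneity of affine quermassintegrals of degree $1$) we conclude $K=L$.

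For case \eqref{eq.crit_ii}, I would proceed dually. Setting $M=K$ and using $\Phi_{\varphi,n-j}(K,K)/\Phi_{n-j}(K)=1$ gives $\Phi_{\varphi,n-j}(L,K)=\Phi_{n-j}(L)$; Theorem \ref{thm.O-M_I_AQ} applied to the pair $(L,K)$ then produces
\begin{align*}
 1=\left(\frac{\Phi_{\varphi,n-j}(L,K)}{\Phi_{n-j}(L)}\right)^{-n}
 \geq \varphi\left(\left(\frac{\Phi_{n-j}(K)}{\Phi_{n-j}(L)}\right)^{1/j}\right),
\end{align*}
so $\Phi_{n-j}(K)\leq\Phi_{n-j}(L)$. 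Setting $M=L$ reverses the inequality, and the same equality argument as before gives $K=L$.

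I do not anticipate any serious obstacle here: the proof is a clean application of Theorem \ref{thm.O-M_I_AQ} via the two distinguished substitutions $M=K$ and $M=L$. The only mildly delicate point is to verify that the strict convexity assumption is actually invoked at the right step, namely in the equality clause of the Orlicz-Minkowski inequality, to upgrade the information ``$\Phi_{n-j}(K)=\Phi_{n-j}(L)$'' into ``$K$ and $L$ are dilates'' and hence equal.
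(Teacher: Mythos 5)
Your proof is correct and follows essentially the same route as the paper: substitute $M=K$ and $M=L$ into the hypothesis, use the normalization $\Phi_{\varphi,n-j}(K,K)=\Phi_{n-j}(K)$ together with the Orlicz--Minkowski inequality of Theorem~\ref{thm.O-M_I_AQ} to force $\Phi_{n-j}(K)=\Phi_{n-j}(L)$, then invoke the equality case (strict convexity) to get that $K$ and $L$ are dilates, and finally conclude $K=L$ via homogeneity. One small slip: the affine quermassintegrals $\Phi_{n-j}$ are positively homogeneous of degree $j$, not degree $1$ (since $\vol_j(\lambda K\mid\xi)=\lambda^j\vol_j(K\mid\xi)$); the conclusion is unaffected because $\lambda^j=1$ still forces $\lambda=1$.
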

\begin{proof}
 First we suppose that \eqref{eq.crit_i} holds, and we take $M=K$. Then by 
 \eqref{eq.O-M_I_AQ} we have
 \begin{align*}
  \Phi_{n-j}(K)^{-n}=\Phi_{\varphi,n-j}(K,L)^{-n}\geq\Phi_{n-j}(K)^{-n}\,
  \varphi\left(\left(\frac{\Phi_{n-j}(L)}{\Phi_{n-j}(K)}\right)^{1/j}\right),
 \end{align*}
 with equality if and only if $K$ and $L$ are dilates of each other. Thus we have,
 \begin{align*}
  \varphi\left(\left(\frac{\Phi_{n-j}(L)}{\Phi_{n-j}(K)}\right)^{1/j}\right)
  \leq 1,
 \end{align*}
 with equality if and only if $K$ and $L$ are dilates of each other. Since 
 $\varphi$ is increasing and $\varphi(1)=1$, we get that $\Phi_{n-j}(L) 
 \leq \Phi_{n-j}(K)$, with equality if and only if $K$ and $L$ are dilates 
 of each other. Under the light of $j$-homogeneity of the affine 
 quermassintegrals, this means that 
 \begin{align*}
  \Phi_{n-j}(L) \leq \Phi_{n-j}(K),
 \end{align*}
 with equality if and only if $K=L$. Similarly, by taking $M=L$ in 
 \eqref{eq.crit_i}, we get
 \begin{align*}
  \Phi_{n-j}(K) \leq \Phi_{n-j}(L),
 \end{align*}
 with equality if and only if $K=L$. Thus, we must have 
 $\Phi_{n-j}(K)=\Phi_{n-j}(L)$  and $K=L$. The same arguments also show 
 that if \eqref{eq.crit_ii} holds, then $K=L$.
\end{proof}

\section{Olricz-Brunn-Minkowski Inequality \\
         for Affine Quermassintegrals.}\label{sec.O-B-M_AQ}

\begin{lemma}
 Let $K,L\in{\cal K}_{o}^n$, $1\leq j \leq n$, and $\varphi\in\mathscr{C}$. 
 Then, for every $\varepsilon>0$ 
 \begin{align}\label{eq.1}
  1=\left(\frac{\Phi_{\varphi,n-j}(K\os\varepsilon\cdot L,K)}
  {\Phi_{n-j}(K\os\varepsilon\cdot L)}\right)^{-n} 
  +\varepsilon\left(\frac{\Phi_{\varphi,n-j}(K\os\varepsilon\cdot L,L)}
  {\Phi_{n-j}(K\os\varepsilon\cdot L)}\right)^{-n}.
 \end{align}
\end{lemma}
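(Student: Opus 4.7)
The plan is to peel the identity one section $\xi\in G_{n,j}$ at a time and then integrate. Set $M:=K+_\varphi\varepsilon\cdot L$. By the definition \eqref{eq.O-Aff-Querm}, the identity in the lemma is equivalent to showing that
\begin{align*}
 \int_{G_{n,j}}\Bigl[V_\varphi^{(j)}(M|\xi,K|\xi)+\varepsilon\,V_\varphi^{(j)}(M|\xi,L|\xi)\Bigr]\,\vol_j(M|\xi)^{-n-1}\,d\nu_{n,j}(\xi)
 =\int_{G_{n,j}}\vol_j(M|\xi)^{-n}\,d\nu_{n,j}(\xi),
\end{align*}
after multiplying both sides of \eqref{eq.1} by $\Phi_{n-j}(M)^{-n}=(\omega_j/\omega_n)^n\int\vol_j(M|\xi)^{-n}\,d\nu_{n,j}$ and expanding the Orlicz mixed affine quermassintegrals on the left.

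Thus it suffices to establish, for every $\xi\in G_{n,j}$, the pointwise (in $\xi$) identity
\begin{align*}
 V_\varphi^{(j)}(M|\xi,K|\xi)+\varepsilon\,V_\varphi^{(j)}(M|\xi,L|\xi)=\vol_j(M|\xi).
\end{align*}
To prove it, I would first invoke the preceding lemma to write $M|\xi=K|\xi+_\varphi\varepsilon\cdot L|\xi$ as an Orlicz sum inside the $j$-dimensional ambient space $\xi$. Then, by the integral representation \eqref{eq.Orlicz-mv_1} of Orlicz mixed volume, applied in $\xi\cong\IR^j$,
\begin{align*}
 V_\varphi^{(j)}(M|\xi,K|\xi)+\varepsilon\,V_\varphi^{(j)}(M|\xi,L|\xi)
 =\frac{1}{j}\int_{\IS^{n-1}\cap\xi}\!\left[\varphi\!\left(\frac{h_{K|\xi}(u)}{h_{M|\xi}(u)}\right)+\varepsilon\,\varphi\!\left(\frac{h_{L|\xi}(u)}{h_{M|\xi}(u)}\right)\right]h_{M|\xi}(u)\,dS(M|\xi,u).
\end{align*}

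The key step is now to apply the implicit Orlicz-addition equation \eqref{eq.Orlicz_sum_*} to $M|\xi=K|\xi+_\varphi\varepsilon\cdot L|\xi$: for every $u\in\IS^{n-1}\cap\xi$ the bracketed sum above is exactly $1$. Hence the integrand reduces to $h_{M|\xi}(u)$, and the classical identity $\frac{1}{j}\int h_A\,dS_A=\vol_j(A)$ (that is, $V_1^{(j)}(A,A)=\vol_j(A)$, applied in $\xi$ with $A=M|\xi$) yields $\vol_j(M|\xi)$, as required. Substituting this back and dividing by $\Phi_{n-j}(M)^{-n}$ gives \eqref{eq.1}.

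There is no real obstacle; the only delicate point is making sure that the Orlicz-sum relation, a priori a statement about support functions on $\IS^{n-1}$, transfers to the $j$-dimensional sphere $\IS^{n-1}\cap\xi$ so that the integral representation of $V_\varphi^{(j)}$ can be applied termwise. This transfer is precisely the content of the projection lemma $(K+_\varphi\varepsilon\cdot L)|\xi=K|\xi+_\varphi\varepsilon\cdot L|\xi$ already proved above, so the argument is essentially a direct unraveling of the definitions.
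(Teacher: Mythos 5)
Your proof is correct and follows essentially the same route as the paper: reduce to the pointwise (in $\xi$) identity $V_\varphi^{(j)}(M|\xi,K|\xi)+\varepsilon V_\varphi^{(j)}(M|\xi,L|\xi)=\vol_j(M|\xi)$, then derive it from the projection lemma $M|\xi=K|\xi\os\varepsilon\cdot L|\xi$, the integral representation \eqref{eq.Orlicz-mv_1} applied in $\xi\cong\IR^j$, and the implicit Orlicz-sum relation \eqref{eq.Orlicz_sum_*}. The paper merely packages the $j$-dimensional step as a standalone identity $V_\varphi(A\os\varepsilon\cdot B,A)+\varepsilon V_\varphi(A\os\varepsilon\cdot B,B)=V(A\os\varepsilon\cdot B)$ for $A,B\in{\cal K}^j$ before substituting $A=K|\xi$, $B=L|\xi$; you inline it and are a bit more explicit about invoking the projection lemma.
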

\begin{proof}
 We first prove the following fact:
 For every $A,B\in{\cal K}^j$, $1\leq j \leq n$ and 
 $\varepsilon>0$ one has that
 \begin{align*}
  V_\varphi(A\os\varepsilon\cdot B,A)
  +\varepsilon V_\varphi(A\os\varepsilon\cdot B,B)=V(A\os\varepsilon\cdot B).
 \end{align*}
 Indeed, if $A_\varphi:=A\os\varepsilon\cdot B$, then
 \eqref{eq.Orlicz-mv_1} and \eqref{eq.Orlicz_sum_*} imply that
 \begin{align*}
  &V_\varphi(A_\varphi,A) 
     + \varepsilon V_\varphi(A_\varphi,B) \\
  &\quad=\frac1n\int_{\IS^{n-1}}
     \varphi\left(\frac{h(A,u)}{h(A_\varphi,u)}\right)
    \,h(A_\varphi,u)\,dS(A_\varphi,u) \\
  &\quad\quad+\frac{\varepsilon}n\int_{\IS^{n-1}}
    \varphi\left(\frac{h(B,u)}{h(A_\varphi,u)}\right)
    \,h(A_\varphi,u)\,dS(A_\varphi,u) \\
  &\quad=\frac1n\int_{\IS^{n-1}}\left[
    \varphi\left(\frac{h(A,u)}{h(A_\varphi,u)}\right)
    +\varepsilon\varphi\left(\frac{h(B,u)}
     {h(A_\varphi,u)}\right)\right]
    \,h(A_\varphi,u)\,dS(A_\varphi,u) \\
  &\quad=\frac1n\int_{\IS^{n-1}}
    \,h(A_\varphi,u)\,dS(A_\varphi,u)=V(A_\varphi).
 \end{align*}
 Thus, by setting $K_\varphi=K\os\varepsilon\cdot L$, we get
 \begin{align*}
  &\Phi_{\varphi,n-j}(K_\varphi,K)^{-n}
   +\varepsilon \Phi_{\varphi,n-j}(K_\varphi,L)^{-n} \\
  &\quad=\left(\frac{\omega_n}{\omega_j}\right)^{-n}
   \int_{G_{n,j}}V_\varphi^{(j)}(K_\varphi|\xi,K|\xi)
   -\varepsilon V_\varphi^{(j)}(K_\varphi|\xi,L|\xi)
   \;\vol_j(K_\varphi|\xi)^{-n-1}\,d\nu_{n,j}(\xi) \\
  &\quad=\left(\frac{\omega_n}{\omega_j}\right)^{-n}
    \int_{G_{n,j}}\vol_j(K_\varphi|\xi)^{-n}\,d\nu_{n,j}(\xi) \\
  &\quad=\Phi_{n-j}(K_\varphi)^{-n}.
 \end{align*}
\end{proof}


\begin{theorem}[Orlicz-Brunn-Minkowski inequality for affine quermassintegrals]
\label{thm.O-B-M_I_AQ}$\hfill$ \\
 Let $K,L\in{\cal K}_{o}^n$, $1\leq j \leq n$, $\varphi\in\mathscr{C}$, and
 $\varepsilon>0$. Then,
 \begin{align}\label{eq.O-BM_ineq_Aff_Quer}
  1\geq 
  \varphi\left(\left(\frac{\Phi_{n-j}(K)}
  {\Phi_{n-j}(K\os\varepsilon\cdot L)}\right)^{1/j}\right)
  +\varepsilon
  \varphi\left(\left(\frac{\Phi_{n-j}(L)}
  {\Phi_{n-j}(K\os\varepsilon\cdot L)}\right)^{1/j}\right).
 \end{align}
 If in addition, $\varphi$ is strictly convex, then equality holds 
 if and only if $K$ and $L$ are dilates of each other. 
\end{theorem}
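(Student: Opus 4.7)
The plan is to combine directly the identity established in the preceding Lemma with the Orlicz-Minkowski inequality for Orlicz mixed affine quermassintegrals (Theorem \ref{thm.O-M_I_AQ}). First, I would set $K_\varphi := K\os\varepsilon\cdot L$ for notational convenience and recall that the Lemma gives the exact identity
\begin{align*}
 1=\left(\frac{\Phi_{\varphi,n-j}(K_\varphi,K)}{\Phi_{n-j}(K_\varphi)}\right)^{-n}
 +\varepsilon\left(\frac{\Phi_{\varphi,n-j}(K_\varphi,L)}{\Phi_{n-j}(K_\varphi)}\right)^{-n}.
\end{align*}

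Next, I would apply Theorem \ref{thm.O-M_I_AQ} twice, with the ambient body $K_\varphi$ playing the role of $K$ in the statement, and with the second body being $K$ and $L$ respectively, to obtain
\begin{align*}
 \left(\frac{\Phi_{\varphi,n-j}(K_\varphi,K)}{\Phi_{n-j}(K_\varphi)}\right)^{-n}
 \geq \varphi\left(\left(\frac{\Phi_{n-j}(K)}{\Phi_{n-j}(K_\varphi)}\right)^{1/j}\right),
\end{align*}
and the analogous inequality with $L$ in place of $K$ on the right-hand side. Substituting both into the identity above yields \eqref{eq.O-BM_ineq_Aff_Quer} in a single line.

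For the equality characterization, the forward direction is immediate: if $L=\lambda K$ for some $\lambda>0$, then $K_\varphi$ is a dilate of $K$ by a factor determined by the implicit equation \eqref{eq.Orlicz_sum_*} applied to a constant ratio, and direct substitution using \eqref{eq.K,K} shows both sides of \eqref{eq.O-BM_ineq_Aff_Quer} equal $1$. Conversely, if $\varphi$ is strictly convex and equality holds in \eqref{eq.O-BM_ineq_Aff_Quer}, then both applications of Theorem \ref{thm.O-M_I_AQ} must hold as equalities, forcing $K_\varphi$ to be a dilate of $K$ and also a dilate of $L$, hence $K$ and $L$ are dilates of each other.

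I do not anticipate a serious obstacle: the identity in the Lemma is precisely designed to feed into the Orlicz-Minkowski inequality, and the substitution is mechanical. The one point requiring a small amount of care is the equality case, where one must argue that being simultaneously a dilate of $K$ and of $L$ forces $K$ and $L$ to be dilates of each other (which is trivial since all three bodies lie in ${\cal K}_o^n$ and dilation is transitive among positively scaled bodies with interior).
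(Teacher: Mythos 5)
Your proposal is correct and follows the same route as the paper: apply the identity from the preceding lemma with $K_\varphi = K\os\varepsilon\cdot L$, then bound each term below via Theorem \ref{thm.O-M_I_AQ} and read off the equality case from the equality conditions of that theorem (dilate of $K_\varphi$ on each side, hence $K$ and $L$ are dilates of each other). The only cosmetic difference is that you spell out the forward direction of the equality case via \eqref{eq.K,K}, whereas the paper subsumes both directions in the ``if and only if'' coming from Theorem \ref{thm.O-M_I_AQ}.
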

\begin{proof}
 By \eqref{eq.1} and the Orlicz-Minkowski inequality 
 \eqref{eq.O-M_I_AQ} we have that
 \begin{align*}
  1
  &=\left(\frac{\Phi_{\varphi,n-j}(K\os\varepsilon\cdot L,K)}
   {\Phi_{n-j}(K\os\varepsilon\cdot L)}\right)^{-n} 
   +\varepsilon\left(\frac{\Phi_{\varphi,n-j}(K\os\varepsilon\cdot L,L)}
   {\Phi_{n-j}(K\os\varepsilon\cdot L)}\right)^{-n} \\
  &\geq \varphi\left(\left(\frac{\Phi_{n-j}(K)}
   {\Phi_{n-j}(K\os\varepsilon\cdot L)}\right)^{1/j}\right)
   +\varepsilon
   \varphi\left(\left(\frac{\Phi_{n-j}(L)}
   {\Phi_{n-j}(K\os\varepsilon\cdot L)}\right)^{1/j}\right).
 \end{align*}
 Let now suppose that $\varphi$ is strictly convex. Then, by the 
 equality conditions in the Orlicz-Minkowski inequality \eqref{eq.O-M_I_AQ}, 
 and since $K,L\in{\cal K}_{o}^n$, we get that equality holds in 
 \eqref{eq.O-BM_ineq_Aff_Quer} if and only if $K$ and $K\os\varepsilon\cdot L$ 
 are dilates of each other and $L$ and $K\os\varepsilon\cdot L$ are dilates 
 of each other. Thus, equality holds if and only if $K$ and $L$ are 
 dilates of each other.
\end{proof}

\medskip

In the last proof, we saw that Theorem \ref{thm.O-B-M_I_AQ} is a 
consequence of Theorem \ref{thm.O-M_I_AQ}. Actually, those two inequalities 
are equivalent.

\medskip

\begin{proposition} 
 Inequalities \eqref{eq.O-BM_ineq_Aff_Quer} and \eqref{eq.O-M_I_AQ} are 
 equivalent. 
\end{proposition}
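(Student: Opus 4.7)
Since the direction \eqref{eq.O-M_I_AQ} $\Rightarrow$ \eqref{eq.O-BM_ineq_Aff_Quer} has already been established inside the proof of Theorem \ref{thm.O-B-M_I_AQ}, the plan is to prove only the reverse implication. The idea is to view \eqref{eq.O-BM_ineq_Aff_Quer} as a one-parameter estimate in $\varepsilon$ that is saturated at $\varepsilon=0$, and then to extract \eqref{eq.O-M_I_AQ} by differentiating at $0^+$ and invoking the first variation formula of Proposition \ref{prop.1st_variation}.

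Concretely, fix $K,L\in{\cal K}_o^n$ and define, for $\varepsilon\geq 0$,
\begin{align*}
 f(\varepsilon) := \varphi\!\left(\!\left(\frac{\Phi_{n-j}(K)}
  {\Phi_{n-j}(K\os\varepsilon\cdot L)}\right)^{\!1/j}\right)
  +\varepsilon\,\varphi\!\left(\!\left(\frac{\Phi_{n-j}(L)}
  {\Phi_{n-j}(K\os\varepsilon\cdot L)}\right)^{\!1/j}\right).
\end{align*}
Since $K\os 0\cdot L=K$ and $\varphi(1)=1$, we have $f(0)=1$, and the hypothesis $f(\varepsilon)\leq 1=f(0)$ for every $\varepsilon>0$ gives
\begin{align*}
 \limsup_{\varepsilon\to 0^+}\frac{f(\varepsilon)-f(0)}{\varepsilon}\leq 0.
\end{align*}
The task is then to identify this $\limsup$ with the quantity that appears in \eqref{eq.O-M_I_AQ}.

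Writing $g(\varepsilon):=\Phi_{n-j}(K\os\varepsilon\cdot L)$, a chain-rule computation at $\varepsilon=0$ shows that the first summand of $f$ contributes $-\frac{\varphi'(1^-)}{j\,\Phi_{n-j}(K)}\,g'(0^+)$, while the second summand contributes only the value at $\varepsilon=0$ of the $\varphi$-factor, namely $\varphi\!\left((\Phi_{n-j}(L)/\Phi_{n-j}(K))^{1/j}\right)$, since the prefactor $\varepsilon$ vanishes. Proposition \ref{prop.1st_variation} provides precisely the value
\begin{align*}
 g'(0^+)=\frac{j}{\varphi'(1^-)}\,\Phi_{n-j}(K)^{n+1}\,\Phi_{\varphi,n-j}(K,L)^{-n},
\end{align*}
and substituting this into the expression for $f'(0^+)$ gives
\begin{align*}
 f'(0^+) = -\Phi_{n-j}(K)^n\,\Phi_{\varphi,n-j}(K,L)^{-n}
   +\varphi\!\left(\!\left(\frac{\Phi_{n-j}(L)}{\Phi_{n-j}(K)}\right)^{\!1/j}\right).
\end{align*}
The inequality $f'(0^+)\leq 0$ rearranges at once to \eqref{eq.O-M_I_AQ}.

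The only delicate point is the existence of the relevant one-sided derivative at $\varepsilon=0$, but this is exactly what Proposition \ref{prop.1st_variation} furnishes, together with the continuity of $\varphi$ near $1$; no additional analytic input is required. Thus \eqref{eq.O-BM_ineq_Aff_Quer} and \eqref{eq.O-M_I_AQ} are equivalent.
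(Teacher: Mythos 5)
Your proof is correct and is essentially the same argument the paper gives: both deduce \eqref{eq.O-M_I_AQ} from \eqref{eq.O-BM_ineq_Aff_Quer} by differentiating the latter at $\varepsilon=0^+$ and invoking the first variation formula of Proposition \ref{prop.1st_variation}. The paper organizes the computation differently — it starts from the first variation of $\Phi_{n-j}(K\os\varepsilon\cdot L)$, rewrites the difference quotient as a product of three limits, and inserts the Brunn--Minkowski bound into the first factor before passing to the limit $t\to 1^-$; your version packages the same algebra more transparently by defining $f(\varepsilon)$, observing $f(\varepsilon)\le f(0)$ forces $f'(0^+)\le 0$, and then computing $f'(0^+)$ by the chain rule. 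The one point worth stating explicitly (which you flag but could sharpen) is that the chain rule applies here because $\Phi_{n-j}(K)/\Phi_{n-j}(K\os\varepsilon\cdot L)\le 1$ approaches $1$ from the left with a nonpositive one-sided derivative, so it is precisely the left derivative $\varphi'(1^-)$ — which exists by convexity — that enters; this is implicit in the paper's limit $\lim_{t\to1^-}(1-t)/(1-\varphi(t^{1/j}))=j/\varphi'(1^-)$.
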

\begin{proof}
 We only have to show that \eqref{eq.O-BM_ineq_Aff_Quer} implies 
 \eqref{eq.O-M_I_AQ}. Indeed,  by Proposition \ref{prop.1st_variation},
 Orlicz-Brunn-Minkowski inequality \eqref{eq.O-BM_ineq_Aff_Quer}, and 
 lemma \eqref{eq.O_lim}, we have
 \begin{align*}
  &\frac{j}{\varphi'(1^-)}\,\Phi_{n-j}(K)^{n+1}\,\Phi_{\varphi,n-j}(K,L)^{-n} 
  =\lim_{\varepsilon\to0^+}
    \frac{\Phi_{n-j}(K+_\varphi\varepsilon\cdot L)
    -\Phi_{n-j}(K)}{\varepsilon} \\
  &=\lim_{\varepsilon\to0^+}
    \frac{1-\varphi\left(\left(\frac{\Phi_{n-j}(K)}
    {\Phi_{n-j}(K+_\varphi\varepsilon\cdot L)}\right)^\frac1j\right)}
    {\varepsilon} 
    \;\cdot\;
    \lim_{\varepsilon\to0^+}\frac{1-\frac{\Phi_{n-j}(K)}
    {\Phi_{n-j}(K+_\varphi\varepsilon\cdot L)}}
    {1-\varphi\left(\left(\frac{\Phi_{n-j}(K)}
    {\Phi_{n-j}(K+_\varphi\varepsilon\cdot L)}\right)^\frac1j\right)} \\
    &\;\;\,\cdot 
    \lim_{\varepsilon\to0^+}\Phi_{n-j}(K+_\varphi\varepsilon\cdot L) \\
  &\geq \lim_{\varepsilon\to0^+}
    \varphi\left(\left(\frac{\Phi_{n-j}(L)}
    {\Phi_{n-j}(K+_\varphi\varepsilon\cdot L)}\right)^\frac1j\right)
    \;\cdot\;
    \lim_{t\to1^-}\frac{1-t}{1-\varphi\big(t^{1/j}\big)}  
    \;\cdot\; \lim_{\varepsilon\to0^+}\Phi_{n-j}(K+_\varphi\varepsilon\cdot L) \\
  &= \varphi\left(\left(\frac{\Phi_{n-j}(L)}
    {\Phi_{n-j}(K)}\right)^\frac1j\right)
    \;\cdot\;\frac{j}{\varphi'(1^-)} \;\cdot\; \Phi_{n-j}(K).
 \end{align*}
\end{proof}


\footnotesize


\vspace{2em}

\noindent Nikos Dafnis

\smallskip

\noindent 
Vienna University of Technology, \\
Institute of Discrete Mathematics and Geometry. \\
Wiedner Hauptstrasse 8-10, 1040 Vienna, Austria. \\
Email address: nikdafnis@gmail.com


\begin{thebibliography}
\footnotesize

\bibitem{Aleks}{A. D. Aleksandrov}. \textit{On the theory of mixed volumes. 
I. Extension of certain concepts in the theory of convex bodies}. Mat. Sb. 
N.S. {\bf 2} (1937), 947-972. [in Russian]

\bibitem{DP}{N. Dafnis \& G. Paouris}. \textit{Estimates for the affine
and dual affine quermassintegrals of convex bodies}. Illinois Journal of
Mathematics {\bf 56}, no 4 (2012) 1005-1021.

\bibitem{FJ}{W. Fenchel and B. Jessen}. \textit{Mengenfunktionen und konvexe 
K\"orper}. Danske Vid. Selskab. Mat.-fys. Medd. {\bf 16} (1938), 1-31.

\bibitem{Firey1}{W. J. Firey}. \textit{Polar means of convex bodies and a dual 
to the Brunn-Minkowski theorem}. Canadian Journal of Mathematics {\bf 13} 
(1961), 444-453.

\bibitem{Firey2}{W. J. Firey}. \textit{$p$-means of convex bodies}.
Math. Scand. {\bf 10} (1962), 17-24.

\bibitem{Gardner}{R. J. Gardner}. \textit{Geometric Tomography},
Encyclopedia of Mathematics and its Applications {\bf 58},
Cambridge University Press, Cambridge 2nd edition (2006).

\bibitem{GHW1}{R. J. Gardner, D. Hug \& W. Weil}. 
\textit{The Orlicz-Brunn-Minkowski theory: A general framework, additions, 
and inequalities}. Journal of Differential Geometry {\bf 97} (2014) 427-476.

\bibitem{GHW2}{R. J. Gardner, D. Hug \& W. Weil}. 
\textit{Dual Orlicz-Brunn-Minkowski theory}. Advances in Mathematics 
{\bf 264} (2014), 700-725.

\bibitem{Grinberg}{E. L. Grinberg}.
\textit{Isoperimetric inequalities and identities for $k$-dimensional 
cross-sections of a convex bodies}. Mathematische Annalen {\bf 291}
(1991) no. 1, 75-86.

\bibitem{HLP}{G. Hardy, J. Littlewood \& G. Pόlya}. \textit{Inequalities}. 
Cambridge University Press, Cambridge, 1934.

\bibitem{LZX}{D.Y. Li, D. Zou \& G.Xiong}. \textit{Orlicz mixed affine 
quermassintegrals}. Sci China Math, {\bf 58} (2015) 17151722, doi 
10.1007/s11425-014-4965-1.

\bibitem {Lutwak1}{E. Lutwak}. \textit{A general isepiphanic inequality},
Proceedings of the American Mathematical Society {\bf 90} (1984), 415-421.

\bibitem{Lutwak2}{E. Lutwak}. \textit{Intersection bodies and dual mixed
volumes}, Advances in Mathematics {\bf 71} (1988), 232-261.

\bibitem{Lutwak3}{E. Lutwak}. \textit{Dual mixed volumes}. Pacific Journal
of Mathematics {\bf 58} (1975) 531-538.

\bibitem{Lutwak4}{E. Lutwak}. \textit{Intersection bodies and dual mixed
volumes}. Advances in Mathematics {\bf 71} (1988) 232-261.

\bibitem{Lutwak7}{E. Lutwak}. \textit{The Brunn-Minkowski-Firey Theory I: 
Mixed volumes and the Minkowski Problem}. J. Differential Geom. {\bf 38} 
(1993), 131-150.

\bibitem{Lutwak5}{E. Lutwak}. \textit{The Brunn-Minkowski-Firey theory II:
affine and geominimal surface areas}. Advances in Mathematics {\bf 118} 
(1996) 244-294.

\bibitem{Lutwak6}{E. Lutwak}. \textit{Extended affine surface area}.
Advances in Mathematics {\bf 85} (1991), 39-68.

\bibitem{Lutwak8}{E. Lutwak}. \textit{ Inequalities for Hadwiger's
harmonic Quermassintegrals}. Mathematische Annalen {\bf 280} (1988),
165-175.

\bibitem{LYZ6}{E. Lutwak, D. Yang \& G. Zhang}.
\textit{Orlicz projection bodies}. Advances in Mathematics {\bf 223} (2010), 
220-242.

\bibitem{LYZ2}{E. Lutwak, D. Yang \& G. Zhang}.
\textit{Orlicz centroid bodies}. Journal of Differential Geometry
{\bf 84} (2010), 365-387.

\bibitem{LYZ1}{E. Lutwak, D. Yang, G. Zhang}.
\textit{The Brunn-Minkowski-Firey inequality for non-convex sets}. Advances 
in Applied Mathematics {\bf 48} (2012), 407-413.

\bibitem{PP}{G. Paouris \& P. Pivovarov}. \textit{Small ball probabilities 
for the volume of random convex sets}. Discrete and Comp. Geom. {\bf 49} 
no. 3 (2013), 601-646.

\bibitem{Petty1}{C. M. Petty}. \textit{Affine isoperimetric problems}. 
Ann. N.Y. Acad. Sci. {\bf 440} (1985), 113-127.

\bibitem{Petty2}{C. M. Petty}. \textit{Geominimal surface area}. 
Geom. Dedicata {\bf 3} (1974), 77-97.

\bibitem{Schneider}{R. Schneider}. \textit{Convex bodies: the Brunn-Minkowski
theory}, Second Expanded Edition  Encyclopedia of Mathematics and its
applications, Cambridge University Press (2014).

\bibitem{Schutt}{C. Sch\"utt} \textit{On the affine surface area}.
Proc. Amer. Math. Soc. {\bf 118} (1993), 1213-1218.

\bibitem{SW}{C. Sch\"utt and E. Werner} \textit{The convex floating body}. 
Math. Scand. {\bf 66} (1990), 275-290.

\bibitem{Werner}{E. Werner}. \textit{Illumination bodies and affine 
surface area}. Studia Math. {\bf 110} (1994), 257-269.

\bibitem{Zhao}{Chang-Jian Zhao}. \textit{Orlicz dual affine quermassintegrals}.
Forum Mathematicum 2017, \\ 
DOI: https://doi.org/10.1515/forum-2017-0174.

\bibitem{ZZX}{B. Zhu, J. Zhou, W. Xu}. \textit{Dual Orlicz-Brunn-Minkowski theory}.
Advances in Mathematics {\bf 264}, (2014) 700-725.
 
\end{thebibliography}
\end{document}